\newtheorem{lem}[theorem]{Lemma}
\newtheorem{cor}[theorem]{Corollary}
\newtheorem{prop}[theorem]{Proposition}
\newtheorem{defn}[theorem]{Definition}
\title{Simple containers for simple hypergraphs}
\author[David Saxton and Andrew Thomason]%
{D\ls A\ls V\ls I\ls D\ns S\ls A\ls X\ls T\ls O\ls N\thanks{Supported in
    part by the Engineering and Physical Sciences Research Council, and
    CNPq bolsa PDJ.}
  \ns and\ns
  A\ls N\ls  D\ls R\ls E\ls W\ns T\ls H\ls O\ls M\ls A\ls S\ls O\ls N\\
  \affilskip IMPA, Estrada Dona Castorina 110, Rio de Janeiro, Brasil
  22460-320 \\
  \affilskip DPMMS, Centre for Mathematical Sciences, Wilberforce
  Road, Cambridge, UK\\
  {\tt saxton@impa.br} and {\tt A.G.Thomason@dpmms.cam.ac.uk}\\
}
\begin{document}
\maketitle
\begin{abstract}
  We give an easy method for constructing containers for simple
  hypergraphs. Some applications are given; in particular, a very
  transparent calculation is offered for the number of $H$-free
  hypergraphs, where $H$ is some fixed uniform hypergraph.
\end{abstract}

%\medskip
%\begin{center} \em To Miki Simonovits on his 70th birthday
%\end{center}
\section{Introduction}

The notion of a collection of containers for a hypergraph was introduced by
the authors in~\cite{ST1}. A collection of containers for a hypergraph
$G$ is a collection $\cal C$ of subsets of $V(G)$ such that every
independent set $I$ is a subset of some member $C\in{\cal C}$. (A subset of
$V(G)$ is independent if it contains no edge.)

The notion was developed further in~\cite{ST2} and several applications
given; related methods and results were proved by Balogh, Morris and
Samotij~\cite{BMS}. These results have since been applied by other
authors.

Our purpose here is to revisit the method of~\cite{ST1}, and to combine it
with a twist that makes it much more widely applicable. It is true that the
method of~\cite{ST2} is not too complicated, and the consequences are often
best possible, but it is subtle. The method of~\cite{ST1}, on the
other hand, is not optimal; nevertheless it is very simple, and it is
particularly transparent. It is sufficient, for example, for counting the
number of $H$-free hypergraphs (see Corollary~\ref{cor:ffree_count}), and
hence it offers a very elementary and straightforward proof of this result.

The method of~\cite{ST1} applies to {\em simple} or {\em linear}
hypergraphs, that is, hypergraphs in which no two edges share more than
one vertex. The container theorem there was as follows. We use the term
{\em$r$-graph} to mean an $r$-uniform hypergraph, where $r\ge2$ always.

\begin{prop}[\cite{ST1}]\label{prop:curly_c}
  Let $G$ be a $d$-regular simple $r$-graph.  If
  $d$ is large,  there is a collection of sets $\mathcal{C}$ of subsets
  of $V(G)$ satisfying
 \begin{itemize}
 \item if $I \subset V(G)$ is independent, there is some $C \in
   \mathcal{C}$ with $I \subset C$,
 \item $|C| \leq (1-1/4r^2)|G|$ for every $C \in \mathcal{C}$,
 \item $|\mathcal{C}| \leq 2^{\alpha|G|}$ where $\alpha=(1/d)^{1/(2r-1)}$.
 \end{itemize}
 \end{prop}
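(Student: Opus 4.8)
The plan is to assign to each independent set $I$ a small \emph{fingerprint} $S=S(I)\subseteq I$ and a container $C=C(S)$ depending only on $S$ (together with $G$ and some fixed ordering of $V(G)$ used to break ties), in such a way that $I\subseteq C$ and $|C|\le(1-1/4r^2)|G|$ for every $I$. One then takes $\mathcal{C}=\{C(S):S\text{ a valid fingerprint}\}$: if $|S|\le\beta|G|$ for a suitable $\beta$ a little smaller than $\alpha$, the number of fingerprints, and hence of containers, is at most the number of subsets of $V(G)$ of size at most $\beta|G|$, which is at most $2^{\alpha|G|}$ once $d$ is large. The containers will have the shape $C=V(G)\setminus X$, where $X$ is a set of vertices that the fingerprint certifies to lie outside $I$; the work is in choosing $S$ so that $|X|\ge|G|/4r^2$ while $|S|$ stays below $\beta|G|$.

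I would first do the case $r=2$, which contains the main idea. Fix a threshold $t$ of order $d^{1/3}$, and run the following process, maintaining a live set $A$ (initially $V(G)$), an excluded set $X$ (initially empty) and $S$ (initially empty): while $|X|<|G|/16$, let $v$ be the vertex of largest degree in $G[A]$ (ties by the fixed order) and either, if $v\notin I$, move $v$ from $A$ into $X$, or, if $v\in I$, put $v$ into $S$ and move $v$ together with all of its $G[A]$-neighbours out of $A$, the neighbours going into $X$. Every vertex ever placed in $X$ lies outside $I$ (a neighbour of a vertex of $S\subseteq I$, or a vertex found not to be in $I$), so $C=V(G)\setminus X$ always contains $I$; and if the loop halts then $|X|\ge|G|/16$, so $|C|\le(1-1/16)|G|$. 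Two things remain: that the loop really does halt this way, and that $|S|$ stays small. Each $v$ added to $S$ is ``charged'' its $\ge t$ neighbours, and the sets removed at distinct ``$v\in I$'' steps are disjoint and fresh, so at all times $|S|\le|X|/t$.

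The loop could only fail to halt if at some moment $G[A]$ had maximum degree below $t$ while $|X|<|G|/16$. But then every vertex of $A$ has lost more than $d-t$ of its edges to $V(G)\setminus A$, and double-counting the edges between $A$ and $V(G)\setminus A$ gives $|A|(d-t)\le|V(G)\setminus A|\,d$, so $|V(G)\setminus A|\ge\bigl(1-o(1)\bigr)|G|/2$. Since $V(G)\setminus A=X\cup S$ and $|S|\le|X|/t$, this forces $|X|\ge\bigl(1-o(1)\bigr)|G|/2-|G|/16t>|G|/16$, a contradiction. Hence the loop halts with $|X|\ge|G|/16$, and then $|S|\le|X|/t\le(|G|/16+d)/t\le\beta|G|$ once $d$ is large. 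Finally, since $v$ at each stage is determined by $A$, and a vertex of $I$ that is ever selected is added to $S$, the test ``$v\in S$ or $v\notin I$'' can be decided during a replay from $S$ alone; so the whole run, and with it $C$, is a function of $S$, and the bound on $|\mathcal{C}|$ follows from the count above.

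For $r\ge3$ the framework is the same, but the step ``$v\in I$'' is no longer as useful: the link of a vertex of a simple $r$-graph is a matching of pairwise disjoint $(r-1)$-sets, and $v\in I$ only tells us that each such $(r-1)$-set meets $V(G)\setminus I$, not which of its vertices does; recursing on a single link is hopeless, since the only container of a matching is its whole vertex set. The way round is to split on whether $I$ is \emph{rich} or \emph{poor}. If $I$ is rich -- roughly, if a greedy choice of vertices of $I$ can keep completing many edges, using that in a simple hypergraph each $(r-1)$-subset of $S$ lies in at most one edge, so that each newly completed edge forces a fresh vertex out of $I$ -- then one reaches $|X|\ge|G|/4r^2$ with $|S|\le\beta|G|$ much as before. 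If the greedy stalls, a double-counting argument of the same flavour shows that $I$ is essentially an independent set in an auxiliary regular hypergraph of strictly smaller uniformity, and one closes by applying the statement inductively in $r$. The delicate point -- and the reason the exponent is $1/(2r-1)$ rather than something larger -- is carrying the fingerprint bound through this induction; the double-counting is what, in each branch, guarantees that the process removes enough vertices before $S$ grows past $\beta|G|$, and I expect this to be the main obstacle to pin down precisely.
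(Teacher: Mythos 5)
Your $r=2$ argument is essentially sound, up to a minor quantitative slip: with $t$ of order $d^{1/3}$ the fingerprint has size about $n/16d^{1/3}$, so the number of containers is $2^{\Theta(n\log d/d^{1/3})}$, which is \emph{not} at most $2^{\alpha n}$ with $\alpha=d^{-1/3}$ once $d$ is large; you need $t$ a little larger (say $d^{1/3}\log^2 d$), which your halting argument tolerates since it only uses $t=o(d)$. The genuine gap is the case $r\ge3$, which is the real content of the proposition and which you leave as a sketch. Your ``rich'' branch exploits only edges having $r-1$ vertices in $I$; such edges may not exist at all (an independent set can meet every edge in at most one vertex), so in general the whole burden falls on the ``poor'' branch, and there you neither define the auxiliary hypergraph of smaller uniformity, nor show it is close enough to regular for the inductive hypothesis to apply, nor carry the fingerprint bound through the induction --- which you yourself identify as the main unresolved obstacle. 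As written, the argument does not establish the bound $\alpha=(1/d)^{1/(2r-1)}$ for any $r\ge3$.

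For comparison, the paper's proof (via Theorem~\ref{thm:simplecont}, which gives the proposition for regular $G$ since degree and uniform measure then coincide) resolves exactly the difficulty you ran into by a level-selection step: because $I$ is independent, there is some $j\in\{0,\dots,r-1\}$ for which edges with $j$ vertices in $I$ are plentiful while edges with $j+1$ vertices in $I$ are scarce. The fingerprint is then not a greedily chosen subset of $I$ but a triple $(R,S,T)$ with $R\subset I$ and $S\subset V\setminus I$ chosen at random with probability $p=(6r/du^j)^{1/(r-1)}$, and $T=\Gamma_j(R,S)\cap I$; the container is $V\setminus(\Gamma_j(R,S)\setminus T)$, and simplicity is used only to make the completion events over the edges through a fixed vertex independent, so that Markov's inequality suffices. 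Any rescue of your algorithmic scheme for $r\ge3$ needs some analogue of this mechanism for the regime in which no edge has many vertices in $I$; at that point you would essentially be reconstructing either the paper's argument or the more elaborate machinery of \cite{ST2} and \cite{BMS}.
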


This proposition is not quite as stated in~\cite{ST1}, but it is pretty much
explicit in the proof of Theorem~1.1 that follows Theorem~3.1.

Two drawbacks limit the applicability of
Proposition~\ref{prop:curly_c}. The first is that many popular container
applications require containers with $e(G[C])$ small, rather than $|C|$
small. The second is that it applies only to regular $r$-graphs. Container
results are more useful when they can be applied iteratively. That is,
given an independent set $I$ in a hypergraph~$G$, we can apply the proposition
once to obtain a container $C$ for~$I$, but we would then like to apply the
proposition again, this time to the hypergraph $G[C]$ (of which $I$ is
still an independent set), thus finding a smaller container $C'\subset
C$. If possible we would then repeat this procedure until very small
containers are obtained. The snag with this procedure as it stands is that
it is unlikely that $G[C]$ is regular even if $G$ is, and so iteration is
not possible. Of course, Proposition~\ref{prop:curly_c} still applies to
graphs that are ``somewhat'' regular (indeed, this follows directly from
Theorem~3.1 of \cite{ST1}), but not in a particularly strong way.

Both these drawbacks can be overcome by adapting the proof of Theorem~3.1
in~\cite{ST1} to use the notion of {\em degree measure}, which we discuss
in~\S\ref{sec:degree}. This yields a version of
Proposition~\ref{prop:curly_c} in which $C$ is bounded in degree measure,
namely Theorem~\ref{thm:simplecont}. Iterated applications of this theorem
give the following result, much stronger and more useful than
Proposition~\ref{prop:curly_c}, and the main theorem of the present paper.

\begin{theorem}\label{thm:sparsecont}
  Let $G$ be a simple $r$-graph of average degree $d$. Let
  $0<\delta<1$. If $d$ is large enough, then there is a collection of sets
  $\mathcal{C}$ of subsets of $V(G)$ satisfying
 \begin{itemize}
%  \item every independent set is a subset of some $C \in \mathcal{C}$,
 \item if $I \subset V(G)$ is independent, there is some $C \in
   \mathcal{C}$ with $I \subset C$,
 \item $e(G[C])<\delta e(G)$ for every $C \in \mathcal{C}$,
 \item $|\mathcal{C}| \leq 2^{\beta |G|}$ where
   $\beta=(1/d)^{1/(2r-1)}$.
 \end{itemize}
\end{theorem}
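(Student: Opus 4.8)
The plan is to apply Theorem~\ref{thm:simplecont} iteratively. That theorem, applied to a simple $r$-graph $H$ carrying a suitable degree measure, produces a family of containers each small in degree measure, the family having size $2^{\tau|H|}$ for a small parameter $\tau$ governed by the degree measure of $H$ rather than by any regularity assumption. Starting from $G=C_0$, I would apply the theorem to obtain a first family of containers; to each container $C$ in it, $I$ still belongs as an independent set of $G[C]$, so I re-apply the theorem to $G[C]$, and iterate, refining a container $C$ only while $e(G[C])\ge\delta e(G)$. Because one application drives the degree measure down by a constant factor --- the degree-measure analogue of the bound $|C|\le(1-1/4r^2)|G|$ in Proposition~\ref{prop:curly_c} --- the stopping condition is reached after a bounded number $k=k(\delta,r)$ of rounds; the final step is to convert ``degree measure of $G[C]$ is small'' into ``$e(G[C])<\delta e(G)$'', which is routine once the degree measure is taken proportional to the degree sequence.

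For the count, a container produced in this way is determined by the nested sequence $G=C_0\supseteq C_1\supseteq\dots\supseteq C_k$ of choices, so $|\mathcal C|$ is at most the product over rounds of the per-round multiplicities $2^{\tau_i|C_{i-1}|}$. Two facts keep this in check: the relevant degree-measure masses shrink geometrically from round to round, and the parameters $\tau_i$ are controlled by the current degree measure and, crucially, do not grow as the graph thins. Hence $\sum_i\tau_i|C_{i-1}|$ is a geometric series dominated by a constant multiple of its first term, which is at most $(1/d)^{1/(2r-1)}|G|$; taking $d$ large enough absorbs both that constant and the factor $k(\delta,r)$, yielding $|\mathcal C|\le 2^{\beta|G|}$ with $\beta=(1/d)^{1/(2r-1)}$.

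I expect the main obstacle to be making the iteration self-consistent: one must track how a degree measure on $G$ gives rise to (or is replaced by) a degree measure on each $G[C]$ so that Theorem~\ref{thm:simplecont} can be reapplied, and one must check that the accumulated cost really telescopes --- in particular that reapplying the theorem to the generally irregular, and eventually sparse, graph $G[C]$ never incurs a parameter $\tau_i$ that blows up. This last point is precisely what the passage to degree measure in Theorem~\ref{thm:simplecont} is designed to handle, so the substantive work lies in the interface between consecutive levels rather than in either level individually; a secondary, bookkeeping matter is to choose the thresholds and the number of rounds so that the single hypothesis ``$d$ large enough'' at the outset guarantees the hypotheses of Theorem~\ref{thm:simplecont} at every level.
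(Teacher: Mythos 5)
Your iteration scheme is the same as the paper's: apply Theorem~\ref{thm:simplecont} to $G$, then to $G[C]$ for each container still satisfying $e(G[C])\ge\delta e(G)$, and stop otherwise; the number of rounds is bounded because, by~(\ref{eqn:musmall}), $\mu(C)\le 1-1/4r^2$ forces $e(G[C_i])\le(1-1/4r^2)e(G[C_{i-1}])$, so $k=\lceil(\log\delta)/\log(1-1/4r^2)\rceil+1$ rounds suffice (note it is the edge count, not the ``degree-measure mass'', that telescopes --- each round the measure is renormalised to the current graph). The gap is in your accounting of $|\mathcal C|$. The per-round costs $\tau_i|C_{i-1}|$ do not form a geometric series dominated by the first term: a container may retain almost all vertices (only its edges are guaranteed to shrink), so $|C_{i-1}|$ need not decrease, and $\tau_i=(1/d_i)^{1/(2r-1)}$ \emph{increases} as the average degree $d_i$ of $G[C_{i-1}]$ falls, so the later terms are the largest, contrary to your claim that the $\tau_i$ ``do not grow as the graph thins''. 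The fact that saves the argument, which you never state, is that you only refine containers with $e(G[C])\ge\delta e(G)$; since $|C|\le|G|$ this forces the average degree of $G[C]$ to be at least $\delta d$, so every $\tau_i\le(\delta d)^{-1/(2r-1)}$ and the total exponent is at most $k(\delta,r)\,(\delta d)^{-1/(2r-1)}|G|$.

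Even with that repair, your final step fails as stated: $k\,\delta^{-1/(2r-1)}d^{-1/(2r-1)}$ exceeds the target $\beta=d^{-1/(2r-1)}$ by a constant factor, and ``taking $d$ large enough'' cannot absorb a constant factor when the exponents are identical. One needs the strictly stronger per-application bound that the proof of Theorem~\ref{thm:simplecont} actually yields, namely $c(r)(\log d)/d^{1/2(r-1)}$ (see the remark following that proof); since $1/2(r-1)>1/(2r-1)$, the surplus exponent absorbs both $k$ and the $\delta$-dependence for large $d$. This is exactly the point the paper flags when it asks the reader to use $\alpha(d)=c(r)(\log d)/d^{1/2(r-1)}$ in the iteration, so you should either invoke that sharper bound explicitly or accept a bound of the form $2^{c(\delta,r)\beta|G|}$.
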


Observe that Theorem~\ref{thm:sparsecont} differs from
Proposition~\ref{prop:curly_c} only in that the condition of $d$-regularity
is replaced by that of average degree~$d$, and the conclusion giving a
bound on $|C|$ is replaced by a bound on $e(G[C])$. This bound on $e(G[C])$
implies, for regular~$G$, that $|C| \le (1 - 1/r + \delta/r)n$ (see the
discussion in~\S\ref{sec:degree}), which is essentially best possible, but
is nonetheless a weaker condition than the bound on the number of edges.
% This bound on $\mu(C)$
% immediately implies $e(G[C]) \leq (1-1/4r^2)e(G)$ by virtue
% of~(\ref{eqn:musmall}). It also implies, for regular~$G$, that $|C|\le
% 1-1/4r^2$, since in this case degree measure is the same as counting
% measure (i.e., $\mu(S)=|S|/n$).
Thus Theorem~\ref{thm:sparsecont} is a
generalization of Proposition~\ref{prop:curly_c}.

\section{Degree measure}\label{sec:degree}

The notion of degree measure was introduced in~\cite{ST2}.
In the definition below, $d(v)$ is the degree of the
vertex~$v$.

\begin{defn}\label{def:mu}
Let $G$ be an $r$-graph of order~$n$ and average degree~$d$. Let $S\subset
V(G)$. The {\em degree measure} $\mu(S)$ of $S$ is defined by
$$
\mu(S)\,=\,\frac{1}{nd}\,\sum_{v\in S}d(v)\,.
$$
\end{defn}

We note some immediate properties of degree measure. First, for any $S\subset
V(G)$,
\begin{equation}\label{eqn:musmall}
e(G[S])\,\le\, {1\over r}\sum_{v\in S} d(v)\, =\, {d|G|\over
  r}\mu(S)\,=\,\mu(S)e(G)\,,
\end{equation}
so $\mu(S)$ small implies $e(G[S])$ small.

Moreover, sets of large measure must contain many edges. Indeed, writing
$\overline S$ for $V(G)-S$ and $e(\overline S,S)$ for the number of edges
meeting both $\overline S$ and $S$, we have
$$
(r-1)nd\mu(\overline S) = (r-1)\sum_{v\notin S}d(v)\,
\ge\, (r-1) e(\overline S, S) \,\ge\,\left\{\sum_{v\in
    S}d(v)-re(G[S])\right\} \,,
$$
that is, $(r-1)\mu(\overline S)\ge \mu(S)-re(G[S])/nd$. Since
$\mu(\overline S)=1-\mu(S)$ this means
\begin{equation}\label{eqn:mubig}
e(G[S])\,\ge\,(\mu(S)-1+{1\over r})\,nd\,.
\end{equation}
In particular, the measure of an independent set cannot exceed
$1-1/r$. Furthermore, if $G$ is regular, then degree and uniform measures
coincide; in this case, the inequality $e(G[S])\le \delta e(G)$ together 
with~(\ref{eqn:mubig}) implies $|S|\le(1-1/r+\delta/ r)n$, as mentioned in
the introduction.

We can now state the theorem which is at the heart of the present paper.
This theorem is already sufficiently powerful for obtaining non-trivial
results, such as in list colouring.

\begin{theorem}\label{thm:simplecont}
  Let $G$ be a simple $r$-graph of average degree
  $d$.  If $d$ is large, there is a collection of sets $\mathcal{C}$
  of subsets of $V(G)$ satisfying
 \begin{itemize}
 \item if $I \subset V(G)$ is independent, there is some $C \in
   \mathcal{C}$ with $I \subset C$,
 \item $\mu(C)\le 1-1/4r^2$ for every $C \in \mathcal{C}$,
 \item $|\mathcal{C}| \leq 2^{\alpha |G|}$ where $\alpha=(1/d)^{1/(2r-1)}$.
 \end{itemize}
\end{theorem}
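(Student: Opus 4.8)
The plan is to rerun the container construction from the proof of Theorem~3.1 of~\cite{ST1}, but to track degree measure rather than cardinality throughout. Recall the shape of that construction. Fix once and for all an ordering of $V(G)$ by non-increasing degree, and fix a threshold $\tau$ (which I expect to take of order $d^{(2r-2)/(2r-1)}$). Given an independent set $I$, process the vertices in order, maintaining a ``live'' set $A$, initially $V(G)$, and a fingerprint $S\subseteq I$, initially empty. When the current vertex $v$ is live, lies in $I$, and still has degree at least $\tau$ in the $r$-graph induced on $A$, put $v$ into $S$ and delete from $A$ a set of vertices determined purely by $G$, the fixed ordering, and the current value of $S$ --- never by $I$. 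The simplicity of $G$ enters precisely here: any two edges through $v$ meet only in $v$, so the link of $v$ inside $G[A]$ is a matching of pairwise disjoint $(r-1)$-sets, and since $v\in I$ and $I$ is independent each of these $(r-1)$-sets must meet $V(G)\setminus I$; the deletion step exploits this spreading to peel off a large, $I$-avoiding block of vertices while committing only the single vertex $v$ to the fingerprint. The process stops once the $r$-graph induced on $A$ has maximum degree below $\tau$, and its output is the container $C=C(S):=A$.

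Three things then have to be checked. The first, $I\subseteq C$, is immediate: every vertex ever removed from $A$ is, by construction, certified from $v\in I$, the independence of $I$ and the matching structure of the links to lie outside $I$, so no vertex of $I$ is ever removed. The second, the bound on $\mu(C)$, is the quickest: when the algorithm halts, $G[C]$ has maximum degree below $\tau$, so $e(G[C])\le|G|\tau/r$; since $\tau/d\to 0$ this is $o(e(G))$, and substituting into~(\ref{eqn:mubig}) with $S=C$ gives $\mu(C)\le 1-1/r+\tau/(rd)<1-1/4r^2$ once $d$ is large. The third is the counting bound. Since the container $C(S)$ is a function of the fingerprint alone, $|\mathcal{C}|$ is at most the number of subsets of $V(G)$ of size at most $s$, where $s$ is the largest possible length of a fingerprint; so it remains to bound $s$, after which the standard estimate for the number of small subsets will give $|\mathcal{C}|\le 2^{\alpha|G|}$ provided $d$ is large enough (here $\alpha=(1/d)^{1/(2r-1)}$ has just enough room to absorb the logarithmic factor in that estimate).

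The heart of the matter --- and the step I expect to be the main obstacle --- is bounding $s$, together with the design of the deletion step that underlies it. The deletion must simultaneously (i) depend only on $G$ and the running fingerprint, not on $I$; (ii) never delete a vertex of $I$; and (iii) destroy a large enough amount of degree measure --- considerably more than just the edges through the chosen vertex $v$, which would be far too few --- that the fingerprint stays short, of length $s=O\bigl(|G|/(d^{1/(2r-1)}\log d)\bigr)$. Reconciling (i)--(iii) is exactly the simple-hypergraph trick of~\cite{ST1}, and it is where the exponent $1/(2r-1)$ is forced: it comes out of balancing the threshold $\tau$ against the per-round loss in the deletion step, the $2r-1$ reflecting that an edge spans $r$ vertices while the obstruction being cleared --- two edges sharing the vertex $v$ --- spans $2r-1$. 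Adapting all of this to degree measure should be routine once the argument of~\cite{ST1} is reread with $\mu$ written for cardinalities: the two substitutes one needs are exactly~(\ref{eqn:musmall}) and~(\ref{eqn:mubig}), and the only genuinely new point is that the deleted vertices, being chosen greedily by degree, carry at least their fair share of degree measure, so the cardinality bounds of~\cite{ST1} upgrade to measure bounds without loss.
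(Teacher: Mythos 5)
There is a genuine gap, and it sits exactly where you place it yourself: the deletion step. You require the block removed upon fingerprinting $v$ to be (i) determined by $G$, the fixed ordering and the current fingerprint $S\subseteq I$ alone, and (ii) disjoint from $I$, ``certified from $v\in I$, the independence of $I$ and the matching structure of the links''. For $r=2$ such a certification exists (delete $N(v)$), but for $r\ge 3$ it cannot: knowing $v\in I$ and that $I$ is independent only tells you that each $(r-1)$-set $g$ in the link of $v$ \emph{meets} $V\setminus I$; for any particular vertex $w\in g$ there are independent sets containing both $v$ and $w$ (no edge has just two vertices), so no specific vertex can be guaranteed to lie outside $I$ from this information, and hence no nonempty $I$-avoiding deletion can be read off from $G$ and the fingerprint in the way you describe. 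The algorithms that do realise your architecture (those of \cite{BMS} and \cite{ST2}) certify deleted vertices differently --- a processed high-degree vertex not in the fingerprint cannot be in $I$, because otherwise it would have been fingerprinted --- and the accompanying accounting that keeps the fingerprint short is precisely the part you defer as ``routine'' while also calling it ``the main obstacle''. As written, neither the deletion rule nor the bound $s=O(|G|/(d^{1/(2r-1)}\log d))$ is proved, so the heart of the argument is missing; only the easy steps (the $\mu(C)$ bound via (\ref{eqn:mubig}) given a max-degree stopping rule, and the count of small fingerprints) are actually carried out.

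Separately, the construction you say you are rerunning is not the one in \cite{ST1}, nor the one in this paper: there is no greedy pass, no degree ordering and no fingerprint inside $I$. The paper fixes $j$ such that $P(j)$ holds and $P(j+1)$ fails, takes \emph{random} $R\subset I$ and $S\subset A\setminus I$ (note $S$ lives outside $I$; the specifying triple need not be computable online from $I$), sets $T=\Gamma_j(R,S)\cap I$ and $C=V\setminus(\Gamma_j(R,S)\setminus T)$, and counts containers by counting triples $(R,S,T)$ of small sets. Simplicity enters only to make the events that the edges $e\in F_j(v)$ are caught by $(R,S)$ independent, so a vertex of the dense set $D$ escapes $\Gamma_j(R,S)$ with probability at most $e^{-12/5}$, and Markov's inequality finishes the proof. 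So even if your outline were completed along \cite{ST2}/\cite{BMS} lines it would be a different (and more delicate) argument than the one the theorem actually rests on.
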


The proof of Theorem~\ref{thm:simplecont} follows quite closely the proof
of Theorem~3.1 in~\cite{ST1}, but modifications are needed to
accommodate the presence of both uniform and degree measures. However,
nothing stronger than Markov's inequality is needed.

The spirit of the proof is readily explained. We need to identify a set of
vertices that are not in~$I$; then $C$ will be the remaining vertices. We
shall show that there are three small subsets $R$, $S$ and $T$ of $V=V(G)$,
such that $R$, $S$ and $T$ determine such a set $V \setminus C$ disjoint
from~$I$. This means that the number of different container sets $C$ that
are so specified is at most the number of triples of small subsets
$(R,S,T)$; this number is not large and this is where the bound on $|{\cal
  C}|$ comes from.

How, then, can we specify $R$, $S$ and $T$ in such a way as to enable us to
identify a set $V \setminus C$ of vertices not in~$I$? There are no edges with all
$r$ vertices inside~$I$, but there are many edges altogether. So there must
be a number $j$, $0\le j<r$, such that there are significantly fewer edges
with $j+1$ vertices in $I$ than there are edges with $j$ vertices in~$I$.
We might then expect to find a substantial set $D\subset V \setminus I$ of vertices
each lying in many of the latter kind of edges. So we pick small subsets
$R\subset I$ and $S\subset V \setminus I$ at random, and look at the set
\[
 \Gamma_j(R,S) = \{ v \in V : \mbox{ there is an edge } \{v\} \cup f \cup g
\, \mbox{ with } f \in R^{(j)} \mbox{ and } g \in S^{(r-j-1)}\}\,,
\]
where $R^{(j)} = \{Y \subset R : |Y|=j\}$, etc.
Notice that $\Gamma_j(R,S)$ is determined by $R$ and $S$. If we write
$T=\Gamma_j(R,S)\cap I$ then clearly $C=(V\setminus \Gamma_j(R,S))\cup T$
is a container for $I$ that is specified by $(R,S,T)$. Now $R$ and $S$ are
small by definition, and we expect $T$ also to be small, because there are
few edges with $j+1$ vertices in~$I$. On the other hand, vertices of $D$
have a good chance of lying inside $\Gamma_j(R,S)$, so we expect
$\Gamma_j(R,S)$ to contain much of $D$ and so have substantial measure,
meaning that $\mu(C)$ is bounded away from one. This is the heart of the
proof.

\begin{proof}[Proof of Theorem~\ref{thm:simplecont}.]
Let $V = V(G)$ be the vertex set of $G$ of size $n=|V|$ and $E = E(G)$ the edge set. 
% For $X\subset V$ let $X^{(t)}=\{Y\subset X:|Y|=t\}$.
For sets $R, S \subset V$ and $0 \leq j \leq r-1$, let $\Gamma_j(R,S)$ be
as defined above. Given subsets $R, S, T \subset V$, let
\[
 C_j(R,S,T) = \left\{
   \begin{array}{c@{\qquad}l}
     V \setminus (\Gamma_j(R,S) \setminus T) & \mbox{if
       $\mu(\Gamma_j(R,S)\setminus T)\ge 1/4r^2$}\\
     \emptyset &\mbox{otherwise.}
   \end{array}
   \right.
\]
Note that $\mu(C_j(R,S,T))\le 1-1/4r^2$ by definition.
We will show that for every independent set $I$, there are small subsets
$R,S,T \subset V$ such that $I \subset C_j(R,S,T)$. Specifically, let
$$
u = \frac{1}{\sqrt{3r}}\left( 6r\over d\right)^{1/2(r-1)}   
\mbox{\quad and \quad} q=15ru\,.
$$
Note that $q$ is small if $d$ is large (depending on $r$). We now define the
collection $\cal C$ by ${\cal C}=\{C_j(R,S,T): 0\le j\le r-1,\ |R|, |S|,
|T| \le qn\}$. Then
\[
 |\mathcal{C}| \,\leq \,r (qn)^3{n \choose qn}^3 \,\leq \,r(qn)^3
 \left(\frac{ne}{qn}\right)^{3qn}\, \leq\, 2^{\alpha n} 
\]
for $d$ sufficiently large, where
$\alpha=(1/d)^{1/(2r-1)}$. This collection $\mathcal{C}$ will satisfy the
conditions of the lemma.

Fix an independent set $I$. For a subset $A \subset V$ with $I\subset A$,
and for $0 \leq j \leq r$, we define the set of edges 
$$
E_j(A)=\{e\in E: e\subset A, \, |e\cap I|\ge j\}\,.
$$
Let $P(j)$ be the statement
\[
 \mbox{for all } A \subset V \mbox{ with $I\subset A$ and }  \mu(A) \geq 1
 - 1/2r +  j/2r^2,\, |E_j(A)|\ge n d u^j/2r \mbox{ holds}\,.
\]
Statement $P(0)$ is true by~(\ref{eqn:mubig}), since
$|E_0(A)|=e(G[A])$.
Statement $P(r)$ is false, because $I$ is independent and so
$E_r(A)=\emptyset$.  There must therefore exist $j \in \{0,1,\ldots,r-1\}$
such that $P(j)$ is true and $P(j+1)$ is false. Fix a set $A$ witnessing
the falsity of $P(j+1)$; thus $I\subset A$, $\mu(A) \geq 1 - 1/2r +
(j+1)/2r^2$ and $|E_{j+1}(A)|<ndu^{j+1}/2r$.  For $v \in A$, let
\[
F_j(v) =
 \{e \in E : v\in e, \, e \in E_j(A),\, e\notin E_{j+1}(A) \}
= \{e: v\in e\subset A,\, |e\cap I|=j\}\,.
\]
Let $D=\{v\in A\setminus I: |F_j(v)|\ge du^j(1-u)/2r\}$. Note that
$I\subset A\setminus D$.

Consider an edge $e\in E_j(A\setminus D)$ with $e\notin E_{j+1}(A)$. Then
$e\subset A\setminus D$ and $|e\cap I|=j$. Since $j<r$ we can pick $v\in e$
with $v\notin I$. Now $E_j(A\setminus D)\subset E_j(A)$ so, by definition
of $F_j(v)$, we have $e\in F_j(v)$. Moreover, since $v\notin I$, the
definition of $D$ and the fact that $v\notin D$ imply $|F_j(v)|<
du^j(1-u)/2r$. Therefore, the total number of edges in $E_j(A\setminus D)$
but not in $E_{j+1}(A)$ is less than $|A\setminus D|du^j(1-u)/2r \le
ndu^j(1-u)/2r$. By the choice of $A$ as witness set, we know that
$|E_{j+1}(A)|<ndu^{j+1}/2r$ and so $E_j(A\setminus D)< 
ndu^j(1-u)/2r+ndu^{j+1}/2r=ndu^j/2r$. Since $P(j)$ is true, this means
$\mu(A\setminus D) < 1 - 1/2r + j/2r^2$. But $\mu(A) \geq 1 - 1/2r +
(j+1)/2r^2$ and therefore $\mu(D)>1/2r^2$.

Let $p=(6r/du^j)^{1/(r-1)}$, so $p^{r-1}du^j=6r$. Since $j\le r-1$, we
observe that $$
p\,\le\,\left( 6r\over d\right)^{1/(r-1)}{1\over u}
\,=\,\sqrt{3r}\left( 6r\over
  d\right)^{1/2(r-1)}\,=\,3ru\,=\frac{q}{5}\,.
$$ 
Let $R \subset I$ and $S \subset A\setminus I$ be random sets where each
vertex (of $I$ and $A\setminus I$ respectively) is included independently with
probability $p$.  By Markov's inequality, the inequalities $|R| \leq 5pn
\leq qn$ and $|S| \leq 5pn \leq qn$ each hold with probability at least
$4/5$.  Let $T = \Gamma_j(R,S) \cap I$. Then clearly, $I \subset
C_j(R,S,T)$ provided $\mu(\Gamma_j(R,S)\setminus T)\ge 1/4r^2$. So to
complete the proof, it is enough to show that the inequalities $|T| \leq
qn$ and $\mu(\Gamma_j(R,S)\setminus T)\ge 1/4r^2$ each hold with
probability at least~$4/5$, because then, with positive probability, all
four inequalities $|R|,|S|,|T|\le qn$ and $\mu(\Gamma_j(R,S)\setminus T)\ge
1/4r^2$ will hold.

A vertex $v \in I$ will be included in $\Gamma_j(R,S)$ (i.e., in $T$) if it
lies in an edge $e$ with $e = \{v\} \cup f \cup g$, $f\in R^{(j)}$, $g\in
S^{(r-j-1)}$. Therefore $e\subset A$ and $|e\cap I|= j+1$, which means
$e\in E_{j+1}(A)$. We know $|E_{j+1}(A)|<ndu^{j+1}/2r$. For an edge $e\in
E_{j+1}(A)$ with $|e\cap I|= j+1$, there are $j+1$ partitions of $e$ of the
form $e=\{v\} \cup f \cup g$ with $v\in I$, $f\in I^{(j)}$ and $g\in
(A\setminus I)^{(r-j-1)}$. For each such
partition, the probability that both $f\in R^{(j)}$ and $g\in S^{(r-j-1)}$
is $p^{r-1}$. So the expected size of $T$ is at most
\[
 rp^{r-1}ndu^{j+1}/2r\,=\,3run\,=\, qn/5.
\]
Applying Markov's inequality again implies that $|T| \leq qn$ with
probability at least $4/5$.

Recall that $D\cap I=\emptyset$ by definition of $D$, and so in particular
$D\cap T=\emptyset$. Let $D^*= D\setminus\Gamma_j(R,S)$. Then $D\setminus
D^*\subset\Gamma_j(R,S)\setminus T$, and so $\mu(\Gamma_j(R,S)\setminus T)
\ge \mu(D\setminus D^*)=\mu(D)-\mu(D^*)$. Let $v \in D$. Then $|F_j(v)|\ge
du^j(1-u)/2r>2du^j/5r$ (since $u$ is small). Each $e\in F_j(v)$ has a
partition $e = f \cup g$ with $f \in I^{(j)}$ and $g \in (A\setminus
I)^{(r-j)}$, where $v\in g$ because $v\notin I$. The probability that $f
\subset R$ and $g-\{v\} \subset S$ is $p^{r-1}$ and, because $G$ is simple,
these events over all~$e\in F_j(v)$ are independent. Hence the probability
that $v\in D^*$, that is, $v \not\in\Gamma_j(R,S)$, is at most
\[
 (1-p^{r-1})^{|F_j(v)|} \leq \exp\{-2p^{r-1}du^j/5r\} = \exp\{-12/5\} < 1/10.
\]
Now $\mu(D^*)=(1/nd)\sum_{v\in D^*} d(v)=(1/nd)\sum_{v\in D} d(v)I_v$ where
$I_v$ is the indicator of the event $v\in D^*$. Taking expectations, 
${\mathbb E}\mu(D^*)=(1/nd)\sum_{v\in D} d(v){\mathbb E}(I_v) < \mu(D)/10$, since 
${\mathbb E}(I_v)=\Pr(v\in D^*)<1/10$.
Markov's inequality implies that, with probability at least~$4/5$, 
$\mu(D^*)\le\mu(D)/2$ holds, and hence $\mu(\Gamma_j(R,S)\setminus T)
\ge \mu(D)-\mu(D^*)\ge\mu(D)/2>1/4r^2$. This completes the proof.
\end{proof}

We remark that the proof shows the theorem to be true for a smaller value
of~$\alpha$, namely $c(r)(\log d)/d^{1/2(r-1)}$ for some function $c(r)$
of~$r$, but the results of~\cite{ST2} are better still, with $\alpha =
c(r)(\log d)/d^{1/(r-1)}$, so we keep the present value for simplicity. One
might wonder why the bound here on $|\cal C|$ is worse than the bound
in~\cite{ST2}. It is not because of the random choice of $R$ and~$S$,
because in the context of the present algorithm random choice is quite
efficient, and a deterministic choice is unlikely to be much better. The
reason that the present method is relatively inefficient is that it uses
edges with exactly $j$ vertices in~$I$ for one value of $j$ only, and
ignores all other edges. The methods of~\cite{ST2} and~\cite{BMS}, which
are unrelated to the present method, are not lengthy to describe but are
nonetheless crafted carefully to use all edges and to be as efficient as
possible.

Applying Theorem~\ref{thm:simplecont} repeatedly, as described earlier,
we obtain the main theorem.

\begin{proof}[Proof of Theorem~\ref{thm:sparsecont}.]
  As we remarked earlier, let us apply Theorem~\ref{thm:simplecont} to $G$
  itself, and then again to each container so obtained, then to each of the
  new containers, and so on for each container with at least
  $\delta e(G)$ edges, until we obtain a collection $\cal C$ of
  containers $C$ with $e(G[C])< \delta e(G)$. Since,
  by~(\ref{eqn:musmall}), each application of
  Theorem~\ref{thm:simplecont} decreases the fraction of edges by $1-1/4r^2$,
  $\cal C$ is obtained after at most
  $k = \lceil (\log \delta)/\log(1-1/4r^2) \rceil +1$ levels of
  iteration, and so $|{\cal C}|\le 2^{k\alpha|G|}$, where $\alpha$ is the
  maximum over all applications of Theorem~\ref{thm:simplecont}. If
  $e(G[C]) \ge \delta e(G)$ then the average degree of~$G[C]$ is at least
  $\delta d$, and the result follows, provided, for the sake of a clean statement, the
  reader will indulge us by taking the value $\alpha(d)=c(r)(\log
  d)/d^{1/2(r-1)}$ rather than the weaker bound explicit in
  Theorem~\ref{thm:simplecont}.
\end{proof}

\section{Applications}

As remarked earlier, for regular hypergraphs, the condition $e(G[C]) <
\delta e(G)$ implies $|C|<(1-1/r+\delta/r)n$.  Plugging
this value into Theorem~2.1 of~\cite{ST1} immediately improves by a factor
of two the bound on the list colouring number in Theorem~1.1 of~\cite{ST1};
nevertheless the bound obtained remains a factor of two worse than the bound in
Theorem~1.3 of~\cite{ST2}, which is probably best possible.

We now give an application of Theorem~\ref{thm:sparsecont} in a situation
where the hypergraph of interest is not simple. In what follows, $H$ is a
fixed $\ell$-graph. We call another $\ell$-graph {\em $H$-free} if it has
no subgraph isomorphic to~$H$. The maximum size of an $H$-free $\ell$-graph
on $N$ vertices is denoted by ${\rm ex}(N,H)$, and $\pi(H)=\lim_{N\to\infty}{\rm
  ex}(N,H){N\choose\ell}^{-1}$ is the limiting maximum density of $H$-free
$\ell$-graphs.

\begin{theorem}\label{thm:ffree_cover}
Let $H$ be an $\ell$-graph and let $\epsilon>0$.
Then, if $N$ is large enough, there exists a collection
$\mathcal C$ of $\ell$-graphs on vertex set~$[N]$ such that
\begin{itemize}
 \item
   every $H$-free $\ell$-graph on vertex set $[N]$ is a subgraph
   of some $C\in\mathcal{C}$,
 \item
   every $C \in \mathcal{C}$ has at most $\epsilon N^{ v(H)}$
   copies of $H$,
   and $e(C) \le (\pi(H)+\epsilon) {N \choose \ell}$,
%    for every $\ell$-graph $C \in \mathcal{C}$,
%    the number of copies of $H$ in $C$ is at most $\epsilon N^{ v(H)}$,
%    and $e(C) \le (\pi(H)+\epsilon) {N \choose \ell}$,
 \item
   $\log |\mathcal{C}| \leq N^{\ell-\sigma}$ where $\sigma=1/2e(H)$.
\end{itemize}
\end{theorem}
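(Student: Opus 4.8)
\emph{Proof strategy.} The plan is to realise the $H$-free $\ell$-graphs on $[N]$ as the independent sets of an auxiliary hypergraph and then to apply Theorem~\ref{thm:sparsecont} repeatedly; the one real complication is that this auxiliary hypergraph is not simple. Write $r=e(H)$. We may assume $r\ge 2$ (the case $r\le1$ being trivial) and, deleting isolated vertices, that $H$ has none (this changes neither $H$-freeness for $N\ge v(H)$, nor $\pi(H)$, nor $e(H)$). Let $\mathcal F$ be the $r$-graph with vertex set ${[N]\choose\ell}$ in which $\{f_1,\dots,f_r\}$ is an edge precisely when $f_1\cup\dots\cup f_r$ carries a copy of $H$ with edge set $\{f_1,\dots,f_r\}$. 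Then $e(\mathcal F)=\Theta(N^{v(H)})$, an $\ell$-graph $G$ on $[N]$ is $H$-free exactly when $E(G)$ is independent in $\mathcal F$, and the number of copies of $H$ in any $\ell$-graph $G$ equals $e(\mathcal F[E(G)])$. Two \emph{distinct} edges of $\mathcal F$ meeting in at least two vertices span at most $2v(H)-\ell-1$ vertices of $[N]$, so $\mathcal F$ has only $O(N^{2v(H)-\ell-1})$ such ``bad pairs''. Finally we use supersaturation in standard form: for the given $\epsilon$ there is $c=c(H,\epsilon)>0$ such that, for $N$ large, any $\ell$-graph on $[N]$ with more than $(\pi(H)+\epsilon){N\choose\ell}$ edges has at least $cN^{v(H)}$ copies of $H$. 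Put $\epsilon_0=\frac12\min\{\epsilon,c\}$ and call an $\ell$-graph $C$ \emph{finished} if it has at most $\epsilon_0N^{v(H)}$ copies of $H$; a finished container then meets both conditions of the theorem.

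We build $\mathcal C$ by iterated refinement, starting from the single container $K_N^{(\ell)}$ and maintaining throughout that every $H$-free $\ell$-graph on $[N]$ lies in some member. Fix small positive constants $\gamma<1/2r$ and $\eta$, and put $p=N^{\ell+1-v(H)-\gamma}$, which is less than $1$ since $v(H)\ge\ell+1$. Refinement takes an unfinished container $C$, with $m_C>\epsilon_0N^{v(H)}$ copies of $H$: we keep each copy of $H$ in $C$ independently with probability $p$, then delete one copy from each surviving bad pair, obtaining a simple $r$-graph $\mathcal G_C$ on vertex set $E(C)$. The crucial point is that $p\,e(\mathcal F[S])=\Omega(N^{\ell+1-\gamma})$ whenever $S\subseteq E(C)$ has $e(\mathcal F[S])\ge\epsilon_0N^{v(H)}$, and this exponent dominates $\log_2 2^{|E(C)|}\le N^\ell$; so a Chernoff bound together with a union bound over all such $S$ shows that, with probability tending to $1$, $e(\mathcal G_C[S])=(1\pm\eta)\,p\,e(\mathcal F[S])$ for every such $S$ (the cleanup deletes only $o(p\,m_C)$ copies and is absorbed into the error term). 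Fix such a $\mathcal G_C$; taking $S=E(C)$ shows its average degree is $\Omega(N^{1-\gamma})$. Now apply Theorem~\ref{thm:sparsecont} to $\mathcal G_C$ with $\delta=\delta_0$ a small constant, obtaining subsets $D\subseteq E(C)$ --- which we read as sub-$\ell$-graphs $C_D\subseteq C$ --- with $e(\mathcal G_C[D])<\delta_0\,e(\mathcal G_C)$ for every $D$, the number of these $D$ being at most $2^{O(N^{\ell-(1-\gamma)/(2r-1)})}$. Any $H$-free $F\subseteq C$ has $E(F)$ independent in $\mathcal F[E(C)]\supseteq\mathcal G_C$, hence independent in $\mathcal G_C$, hence inside some $C_D$; so replacing $C$ by $\{C_D\}$ preserves the invariant.

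The concentration estimate is exactly what turns $e(\mathcal G_C[D])<\delta_0\,e(\mathcal G_C)$, which a priori bounds only the copies of $H$ surviving in $\mathcal G_C$, into a bound on \emph{all} copies of $H$ in $C_D$: either $e(\mathcal F[D])<\epsilon_0N^{v(H)}$, in which case $C_D$ is finished, or $e(\mathcal F[D])\ge\epsilon_0N^{v(H)}$ and then $p\,e(\mathcal F[D])\le(1-\eta)^{-1}e(\mathcal G_C[D])<(1-\eta)^{-1}\delta_0\,e(\mathcal G_C)\le(1-\eta)^{-1}(1+\eta)\delta_0\,p\,m_C<\frac12 p\,m_C$ once $\delta_0$ is small, so $C_D$ has fewer than $\frac12 m_C$ copies of $H$. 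Thus each refinement either finishes a container or halves its number of copies of $H$; since $K_N^{(\ell)}$ has $\Theta(N^{v(H)})$ copies of $H$, after $k=O_{H,\epsilon}(1)$ rounds every container is finished, and then $\log|\mathcal C|\le k\cdot O(N^{\ell-(1-\gamma)/(2r-1)})\le N^{\ell-1/2r}$ for $N$ large, because $(1-\gamma)/(2r-1)>1/2r$. This is the claimed bound with $\sigma=1/2e(H)$.

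I expect the genuine obstacle to be precisely the failure of $\mathcal F$ to be simple. Passing to \emph{some} simple subhypergraph and applying Theorem~\ref{thm:sparsecont} controls only the edges of the chosen subhypergraph inside the new container, not the number of copies of $H$ there, and so on its own yields no usable progress measure for the iteration. The remedy above --- sparsifying so heavily that $p\,e(\mathcal F[S])$ still beats the $2^{|E(C)|}$ union-bound barrier, so that $e(\mathcal G_C[S])$ concentrates \emph{simultaneously} over every candidate container $S$ --- is what lets one push the container theorem's conclusion back to $\mathcal F$, and is where care is needed; the remaining ingredients (supersaturation to trade a bound on copies for one on edges, the crude count of bad pairs, the choice of $\gamma$, and the $O_{H,\epsilon}(1)$ bound on the number of rounds) are routine.
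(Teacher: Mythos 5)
Your proposal is correct and, at its core, is the paper's own argument: the auxiliary $e(H)$-uniform hypergraph on vertex set $[N]^{(\ell)}$, random sparsification at edge probability $N^{\ell+1-v(H)-o(1)}$ with a Chernoff-plus-union bound ensuring that edge counts are preserved simultaneously on all (dense) subsets, deletion of one edge from each overlapping pair to achieve simplicity, an application of Theorem~\ref{thm:sparsecont}, and supersaturation to turn the bound on copies of $H$ into the edge bound --- this is precisely Lemma~\ref{lem:sparse} and the paper's proof of Theorem~\ref{thm:ffree_cover}, including the same choice of exponent ($1-\gamma>1-1/2e(H)$ versus the paper's $\rho>1-1/2e(H)$) and the same $O(N^{2v(H)-\ell-1})$ count of overlapping pairs. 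The only real deviation is your outer loop of per-container re-sparsifications with the halving progress measure, which is redundant: since your concentration holds for every $S$ with $e(\mathcal F[S])\ge\epsilon_0N^{v(H)}$ and also gives $e(\mathcal G)\le(1+\eta)p\,e(\mathcal F)$, one global sparsification followed by a single application of Theorem~\ref{thm:sparsecont} with $\delta$ a small constant multiple of $\epsilon_0$ (the paper takes $\delta=\eta/2$ after Lemma~\ref{lem:sparse}) already forces every container to be finished in one round.
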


The meaning of this theorem is that every $H$-free $\ell$-graph is a
subgraph of one of just a few $\ell$-graphs that are nearly $H$-free. The
strength of the result can be measured by the bound on $\log
|\mathcal{C}|$. The bound $N^\ell$ is, of course, trivial, but any bound
where $\sigma$ is some positive constant is worthwhile. It is not
possible for $\sigma$ to exceed the value $m(H)=\max_{H' \subset H,\,e(H') >
  1} {(e(H')-1)/(v(H')-\ell)}$, and in fact a best possible bound was
obtained in~\cite[Theorem~1.3]{ST2}, but the method here is simpler. Any
value of $\sigma>0$, such as that given by Theorem~\ref{thm:ffree_cover},
immediately gives the following corollary, because each graph $C$ in the
statement of the theorem has at most $2^{(\pi(H)+\epsilon) {N \choose
    \ell}}$ subgraphs.

\begin{cor}\label{cor:ffree_count}
  Let $H$ be an $\ell$-graph. The number of $H$-free
  $\ell$-graphs on vertex set $[N]$ is $2^{(\pi(H)+o(1)) {N \choose \ell}}$.
\end{cor}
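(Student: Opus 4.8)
The plan is to derive Corollary~\ref{cor:ffree_count} directly from Theorem~\ref{thm:ffree_cover}. The upper bound is the substantive direction; the lower bound is classical. For the upper bound, I would fix $\epsilon>0$ and apply Theorem~\ref{thm:ffree_cover} to obtain, for $N$ large, a collection $\mathcal C$ of $\ell$-graphs on $[N]$ such that every $H$-free graph on $[N]$ is a subgraph of some $C\in\mathcal C$, each $C$ satisfies $e(C)\le(\pi(H)+\epsilon)\binom N\ell$, and $\log|\mathcal C|\le N^{\ell-\sigma}$ with $\sigma=1/2e(H)>0$. Since every $H$-free graph is a subgraph of some $C$, and each $C$ has at most $2^{e(C)}\le 2^{(\pi(H)+\epsilon)\binom N\ell}$ subgraphs in total, the number of $H$-free $\ell$-graphs on $[N]$ is at most
\[
|\mathcal C|\cdot 2^{(\pi(H)+\epsilon)\binom N\ell}\,\le\, 2^{N^{\ell-\sigma}}\cdot 2^{(\pi(H)+\epsilon)\binom N\ell}\,=\,2^{(\pi(H)+\epsilon+o(1))\binom N\ell},
\]
using that $N^{\ell-\sigma}=o(N^\ell)=o\bigl(\binom N\ell\bigr)$ because $\sigma>0$. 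Letting $\epsilon\to0$ (after $N\to\infty$) gives the upper bound $2^{(\pi(H)+o(1))\binom N\ell}$.

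For the matching lower bound, I would invoke the definition of $\pi(H)$ and the extremal number: by definition $\operatorname{ex}(N,H)=(\pi(H)+o(1))\binom N\ell$, so for $N$ large there is an $H$-free $\ell$-graph $G_0$ on $[N]$ with $e(G_0)\ge(\pi(H)-\epsilon)\binom N\ell$. Every subgraph of $G_0$ is also $H$-free (being a subgraph of an $H$-free graph), and $G_0$ has $2^{e(G_0)}\ge 2^{(\pi(H)-\epsilon)\binom N\ell}$ subgraphs, all of them distinct $H$-free $\ell$-graphs on $[N]$. Hence the number of $H$-free $\ell$-graphs on $[N]$ is at least $2^{(\pi(H)-\epsilon)\binom N\ell}$, and again letting $\epsilon\to0$ yields the lower bound. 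Combining the two bounds gives the stated count $2^{(\pi(H)+o(1))\binom N\ell}$.

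The only genuine subtlety — and it is a minor one — is the interchange of the limits in $N$ and $\epsilon$: one should phrase the conclusion as ``for every $\epsilon>0$ there is $N_0$ such that for $N\ge N_0$ the count lies between $2^{(\pi(H)-\epsilon)\binom N\ell}$ and $2^{(\pi(H)+\epsilon)\binom N\ell}$,'' which is exactly what $2^{(\pi(H)+o(1))\binom N\ell}$ means. The other point to check is that $N^{\ell-\sigma}$ really is negligible compared to $\binom N\ell\sim N^\ell/\ell!$; this is immediate since $\sigma$ is a fixed positive constant depending only on $H$. No new ideas beyond Theorem~\ref{thm:ffree_cover} are needed, which is precisely the point the authors are making about the transparency of the method.
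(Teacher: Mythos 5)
Your proposal is correct and follows exactly the paper's route: the upper bound by applying Theorem~\ref{thm:ffree_cover} and noting each container has at most $2^{(\pi(H)+\epsilon)\binom{N}{\ell}}$ subgraphs while $\log|\mathcal C|\le N^{\ell-\sigma}$ is negligible, and the lower bound from the subgraphs of an extremal $H$-free graph. No substantive differences from the paper's argument.
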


This corollary is the same as~\cite[Corollary~1.4]{ST2}. In the case
$\ell=2$, this corollary was proved for complete~$H$ by Erd\H{o}s, Kleitman
and Rothschild~\cite{EKR} and for general~$H$ by Erd\H{o}s, Frankl and
R\"odl~\cite{EFR}. Nagle, R\"odl and Schacht~\cite{NRS} proved it for
general~$\ell$ using hypergraph regularity methods. The present paper
offers the simplest known proof.

Theorem~\ref{thm:sparsecont} can, in a similar way, be used to give a
simple way to count the number of $\ell$-graphs which have no {\em induced}
copy of~$H$, and more generally to evaluate the probability that a random
$\ell$-uniform hypergraph $G^{(\ell)}(n,p)$ contains no induced copy
of~$H$. For $\ell=2$, the value when $p=1/2$ was determined by Pr\"omel and
Steger~\cite{PS} and for general $p$ by Bollob\'as and Thomason~\cite{BT}
(see also Marchant and Thomason~\cite{MT2}). For general~$\ell$ the value
for $p=1/2$ was given by Dotson and Nagle~\cite{DN}, again using hypergraph
regularity techniques.  We don't give details of the result, which is
identical to~\cite[Theorem~2.5]{ST2}. We merely point out that it can be
derived from a container theorem, as demonstrated in~\cite{ST2}, and that
the container theorem presented here can be used instead, via an argument
very similar to the one used to prove Theorem~\ref{thm:ffree_cover}.

Another application of Theorem~\ref{thm:ffree_cover} is the following
``sparse Tur\'an theorem''. Here the value of $\sigma$ does affect the
strength of the application.

\begin{cor}\label{thm:ffree_sparse}
Let $H$ be an $\ell$-graph and let $0<\gamma<1$.
For some $c>0$, for $N$ sufficiently large and for $p \ge cN^{-\sigma}$,
where $\sigma=1/2e(H)$, the following event holds with probability greater than 
$1-\exp\{-\gamma^3 p {N \choose \ell} / 512 \}$:
$$
\mbox{every $H$-free subgraph of $G^{(\ell)}(N,p)$ has at most 
$(\pi(H) + \gamma) p{N \choose \ell}$ edges.}
$$
\end{cor}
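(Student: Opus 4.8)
The plan is to derive Corollary~\ref{thm:ffree_sparse} from Theorem~\ref{thm:ffree_cover} by a standard union-bound argument over the container collection, combined with a Chernoff bound on the number of edges of the random hypergraph that fall inside any fixed container.

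First I would apply Theorem~\ref{thm:ffree_cover} with a suitably chosen $\epsilon=\epsilon(\gamma,H)$ (something like $\epsilon=\gamma/4$) to obtain a collection $\mathcal C$ with $\log|\mathcal C|\le N^{\ell-\sigma}$, each $C\in\mathcal C$ having $e(C)\le(\pi(H)+\epsilon)\binom N\ell$. Now if $G^{(\ell)}(N,p)$ has an $H$-free subgraph $F$ with more than $(\pi(H)+\gamma)p\binom N\ell$ edges, then $F\subset C$ for some $C\in\mathcal C$, and since $F\subset G^{(\ell)}(N,p)$ we get that $G^{(\ell)}(N,p)$ has more than $(\pi(H)+\gamma)p\binom N\ell$ edges inside~$C$. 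For a fixed $C$, the number of edges of $G^{(\ell)}(N,p)$ lying in $C$ is $\mathrm{Bin}(e(C),p)$, with mean $pe(C)\le(\pi(H)+\epsilon)p\binom N\ell$. Choosing $\epsilon<\gamma$, the deviation $(\pi(H)+\gamma)p\binom N\ell-pe(C)\ge(\gamma-\epsilon)p\binom N\ell$ is a constant fraction of $p\binom N\ell$, so a Chernoff bound of the form $\Pr(\mathrm{Bin}(m,p)>pm+t)\le\exp\{-t^2/(2pm+2t/3)\}$ (or the multiplicative version) gives that this probability is at most $\exp\{-c'\gamma^2 p\binom N\ell\}$ for an appropriate absolute constant; pinning down $\epsilon=\gamma/2$ and tracking constants should yield the stated exponent $\gamma^3 p\binom N\ell/512$ (the extra factor of $\gamma$ appearing because one wants the bound uniform and a little slack is spent).

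Then I would take a union bound over all $C\in\mathcal C$: the failure probability is at most $|\mathcal C|\exp\{-c'\gamma^2p\binom N\ell\}=\exp\{N^{\ell-\sigma}-c'\gamma^2p\binom N\ell\}$. This is where the hypothesis $p\ge cN^{-\sigma}$ enters: then $p\binom N\ell\ge c'' N^{\ell-\sigma}$, so for $c$ large enough in terms of $\gamma$ (and $\ell$) the term $c'\gamma^2 p\binom N\ell$ dominates $2N^{\ell-\sigma}$, leaving a bound of $\exp\{-\frac12 c'\gamma^2p\binom N\ell\}\le\exp\{-\gamma^3p\binom N\ell/512\}$ after absorbing constants into the choice of~$c$. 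One subtlety to note is that the constant $c$ must be chosen so that both ``$N$ large enough'' for Theorem~\ref{thm:ffree_cover} and the domination of the union-bound term hold; since $\gamma$ is fixed this is harmless.

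The main obstacle is purely bookkeeping: choosing $\epsilon$ as a function of $\gamma$ and chasing the constants through the Chernoff bound and the union bound so that the clean exponent $\gamma^3p\binom N\ell/512$ comes out, while ensuring the threshold constant $c$ is chosen consistently. There is no real conceptual difficulty — once Theorem~\ref{thm:ffree_cover} is in hand, the argument is the routine ``container + concentration'' transference. The only place one must be slightly careful is that the containers $C$ depend only on $N$ (not on the random hypergraph), so the Chernoff estimate may legitimately be applied to each fixed $C$ before the union bound.
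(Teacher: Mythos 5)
Your proposal is correct and is exactly the argument the paper has in mind: the paper omits the details, saying the corollary follows from Theorem~\ref{thm:ffree_cover} by the same container-plus-Chernoff transference argument given in~\cite{ST2}, which is precisely your union bound over the (deterministic) containers combined with a binomial tail estimate, using $p\ge cN^{-\sigma}$ so that $p\binom N\ell$ dominates $\log|\mathcal C|\le N^{\ell-\sigma}$. The constants work out with room to spare (e.g.\ $\epsilon=\gamma/2$ and the standard Chernoff bound give an exponent of order $\gamma^2 p\binom N\ell/32$, which exceeds $\gamma^3 p\binom N\ell/512$ since $\gamma<1$), so only the bookkeeping you describe remains.
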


A stronger version of this corollary, with $\sigma=1/m(H)$, was conjectured
by Kohayakawa, \L{u}czak and R\"odl~\cite{KLR}; it was proved in the case
of strictly balanced~$H$ by Conlon and Gowers~\cite{CG} and in full
generality by Schacht~\cite{S}. The strong version follows easily
from~\cite[Theorem~1.3]{ST2}, as shown in~\cite{ST2}, and the same argument
gives Corollary~\ref{thm:ffree_sparse} from Theorem~\ref{thm:ffree_cover},
so we do not give details here. We remark that the point of the corollary
is how small the value of $p$ can be made: Szemer\'edi's regularity lemma
allows $p=o(1)$. We note that Kohayakawa, R\"odl and Schacht~\cite{KRS} and
Szab\'o and Vu~\cite{SV} both proved the corollary for complete 2-graphs
with $\sigma=1/(v(H)-1)$ (slightly better in the case of~\cite{SV}), but again
we believe the present proof is the shortest for some~$\sigma>0$. It yields
in a similar fashion weak versions of the other so-called K\L{R} conjectures.

The proof of Theorem~\ref{thm:ffree_cover} consists of finding a set of
containers for the independent sets in the hypergraph $G=G(N,H)$, which is
defined as follows. The $n={N\choose \ell}$ vertices of $G$ are the
$\ell$-sets in~$[N]$, that is, $V(G)=[N]^{(\ell)}$. The edges of $G$ are
the subsets of size $e(H)$ of $V(G)$ that form an $\ell$-graph isomorphic
to~$H$.

Given a subset $S\subset V(G)$, we can regard $S$ as the edges of an
$\ell$-graph with vertex set~$[N]$. The subset $S$ is independent in $G$ if
and only if $S$, regarded as an $\ell$-graph, is $H$-free. A set of
containers $\cal C$ for the independent sets in $G$ is thus a set of
$\ell$-graphs on vertex set $N$ such that every $H$-free graph is a subset
of one of these container graphs. Thus Theorem~\ref{thm:ffree_cover} is a
statement about the existence of a collection $\cal C$ of containers for
$G(N,H)$ having certain properties.

The stronger~\cite[Theorem~1.3]{ST2} was obtained by applying a container
result directly to $G(N,H)$. Here, we use the simpler
Theorem~\ref{thm:sparsecont} to give a set of containers with slightly
weaker properties. We cannot apply Theorem~\ref{thm:sparsecont} directly to
$G(N,H)$ because this hypergraph is not simple. We therefore apply it
instead to a subgraph $G_{\rm simple}(N,H)$ of $G(N,H)$. Each independent
set of $G(N,H)$ is independent in $G_{\rm simple}(N,H)$, so containers for
$G_{\rm simple}(N,H)$ will also be containers for $G(N,H)$. To show that
these containers have the properties claimed in
Theorem~\ref{thm:ffree_cover}, we need the following lemma.

\begin{lem}\label{lem:sparse}
  Let $\eta>0$ and $0<\rho<1$. Then, if $N$ is large enough, there exists a
  simple sub-hypergraph $G_{\rm simple}=G_{\rm simple}(N,H)$ of $G=G(N,H)$,
  such that $V(G_{\rm simple})=V(G)$ and $G_{\rm simple}$ has average
  degree at least $N^{\rho}$. Moreover, for all $S\subset V(G)$, if
  $e(G[S]) \ge \eta e(G)$, then $e(G_{\rm simple}[S]) \ge \eta
  e(G_{\rm simple})/2$.
\end{lem}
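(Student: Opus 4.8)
The plan is to build $G_{\rm simple}$ by a random selection: include each edge of $G=G(N,H)$ independently with probability $p$, for a suitable $p=p(N)$, and then delete edges to destroy all pairs of edges sharing two or more vertices of $V(G)$. Recall $V(G)=[N]^{(\ell)}$ and an edge of $G$ is a copy of $H$, so it is a set of $e(H)$ vertices of $G$; two edges ``share more than one vertex'' means two distinct copies of $H$ in $[N]$ that have at least two $\ell$-sets in common. Write $n=\binom{N}{\ell}$, and note $e(G)=\Theta(N^{v(H)})$ and each vertex of $G$ (each $\ell$-set) has degree $\Theta(N^{v(H)-\ell})$ in $G$, so $G$ is essentially regular up to constants. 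First I would fix $p=N^{-\tau}$ with $\tau$ chosen so that, after the cleaning step, the surviving average degree is still a power of $N$; the number of ``bad'' pairs of edges of $G$ through a fixed vertex is $O(N^{v(H')})$ over the relevant overlap patterns $H'$, which is of strictly smaller order than the square of the degree, and this is exactly the slack that lets a positive power of $N$ survive. Concretely one wants $\tau$ small enough that $p^2 \cdot (\text{bad pairs per vertex}) = o(p\cdot d_G)$, i.e. $p$ times the excess is $o(1)$; since the excess exponent is strictly smaller, any $\tau$ up to some $\tau_0(H)>0$ works, and one checks $\tau_0$ can be taken so that the resulting exponent $\rho$ is as in the statement — ultimately $\sigma = 1/2e(H)$ in Theorem~\ref{thm:ffree_cover} comes from optimizing this exponent against the term $p\,n$, but for the lemma we just need \emph{some} $\rho>0$.

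Next, for the degree condition I would use a two-step argument: by linearity of expectation the random subgraph $G'$ (before cleaning) has expected average degree $p\,d_G = \Theta(N^{v(H)-\ell-\tau})$, and by a concentration inequality (Chernoff, since the indicator of each edge is independent) almost every vertex has degree within a constant factor of its mean; the cleaning step removes, in expectation, only $o(1)$ times as many edge-incidences at each vertex as were present, so after cleaning the average degree is still $\Theta(N^{\rho})$ for the appropriate $\rho>\tau_0/2$ or so. One must be slightly careful that the cleaning might in principle hit some vertices hard, but since we only need the \emph{average} degree to be at least $N^\rho$, a global count suffices: total surviving edges $\ge p\,e(G) - (\text{bad pairs})\cdot p^2$, and the second term is $o(p\,e(G))$ by the exponent gap, so $e(G_{\rm simple}) \ge \tfrac12 p\,e(G)$ say, giving average degree $\ge N^\rho$ for $N$ large.

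The remaining, and I expect the main, obstacle is the ``moreover'' clause: we need that for \emph{every} $S\subset V(G)$ with $e(G[S])\ge \eta e(G)$ we have $e(G_{\rm simple}[S])\ge \tfrac12\eta\, e(G_{\rm simple})$. The issue is that there are $2^n$ sets $S$ and only $N^{O(1)}$ of them could be controlled by a naive union bound against a Chernoff tail of width $1/\mathrm{poly}(N)$; so pointwise concentration of $e(G_{\rm simple}[S])$ for each $S$ is hopeless. The fix is that we do not need concentration for each $S$: the conclusion $e(G_{\rm simple}[S])\ge \tfrac12\eta e(G_{\rm simple})$ should hold for a \emph{sharper} reason. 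I would argue as follows. It is enough to have $e(G'[S]) \ge \tfrac34 \eta\, p\,e(G)$ for all dense $S$ together with $e(G_{\rm simple}) \le p\,e(G)$ and a bound on how much the cleaning can destroy inside any $S$. For the first, rather than union-bounding over $S$ I would union-bound over the much smaller family of \emph{edge-sets} $E(G[S])$ of size $\ge \eta e(G)$ that arise — but there are still too many. The genuinely correct route, and the one I would take, is to observe that $e(G[S]) \ge \eta e(G)$ with $G$ essentially regular forces $\mu(S)\ge \eta$ (degree measure), hence by \eqref{eqn:mubig}-type reasoning $S$ contains a constant fraction of \emph{all} $\ell$-sets; and a uniformly random $p$-subset of the edges of a fixed dense graph concentrates, but to beat $2^n$ we instead note that we may first expose $G'$ and then, \emph{given} $G'$, the cleaning is deterministic, so it suffices to show: for the fixed (typical) graph $G'$, every $S$ with $e(G[S])\ge\eta e(G)$ has $e(G'[S])\ge \tfrac34\eta\,e(G')$. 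This is a statement about how well the random subgraph $G'$ ``spreads'' over all vertex subsets, and it follows from a standard ``$\epsilon$-approximation'' / sampling lemma: a random $p$-subset of the edges of any graph on $n$ vertices is, with probability $1-o(1)$, such that $|e(G'[S]) - p\,e(G[S])| \le \epsilon\,p\,e(G)$ for all $S$ simultaneously, provided $p\,e(G) \gg n$ (so that the VC-dimension / union bound over the relevant set system closes). Since $p\,e(G) = \Theta(N^{v(H)-\ell-\tau})$ and $n = \binom{N}{\ell} = \Theta(N^\ell)$, we have $p\,e(G) \gg n$ as long as $v(H)-\ell-\tau > \ell$, i.e. $v(H) > 2\ell + \tau$; when $H$ has few vertices one instead uses that the relevant set system (subsets of $V(G)$ cut out by choices of which $H$-copies to include) has polynomially bounded ``trace'', coming from the bounded number of $\ell$-sets in a copy of $H$, so the correct union bound is over $n^{O(e(H))}$ events and closes whenever $p\,e(G) \gg e(H)\log n$, which holds for any $\tau<\tau_0$. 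Combining: $e(G'[S]) \ge p\,e(G[S]) - \epsilon p\,e(G) \ge (\eta-\epsilon)p\,e(G)$, the cleaning removes at most $o(p\,e(G))$ edges total hence at most that many from inside $S$, and $e(G_{\rm simple})\le p\,e(G)$, so $e(G_{\rm simple}[S]) \ge (\eta - \epsilon - o(1))p\,e(G) \ge \tfrac12\eta\,e(G_{\rm simple})$ once $\epsilon < \eta/4$ and $N$ is large. Choosing $\epsilon = \eta/8$ and $\tau$ small enough to make the sampling lemma applicable then yields the lemma with some $\rho>0$, and tracking the exponent gives the $\sigma$ quoted downstream.
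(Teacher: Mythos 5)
Your overall skeleton is the same as the paper's: sample the edges of $G=G(N,H)$ independently with probability $p$, delete one edge from each overlapping pair (two $H$-copies sharing at least two $\ell$-sets), and then control three things at once --- the total number of sampled edges, the number of overlapping pairs, and $e(G'[S])$ for every $S$ with $e(G[S])\ge\eta e(G)$ --- finally noting that the cleaning step removes at most $F=o(E)$ edges and hence at most that many from inside any $S$. However, two of your quantitative claims are wrong in ways that break the argument as written.

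First, your constraint on $p$ points the wrong way. Writing $h=v(H)$, the number of overlapping pairs in $G$ is of order $N^{2h-\ell-1}$, which for $h\ge\ell+2$ dwarfs $e(G)=\Theta(N^{h})$; after sampling, the expected number of surviving overlapping pairs is of order $p^2N^{2h-\ell-1}$, and this is $o(p\,e(G))$ only if $p=o(N^{-(h-\ell-1)})$. So one must sample sparsely --- the paper takes $p=N^{-h+\ell+\rho'}$ with $\rho<\rho'<1$ --- whereas your prescription ``$p=N^{-\tau}$ with any $\tau$ up to some $\tau_0(H)>0$'' is the opposite direction: with $\tau$ small the cleaning step is expected to destroy far more edges than survive, so $G_{\rm simple}$ need not retain any positive fraction of $G'$. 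Second, you compute $p\,e(G)=\Theta(N^{h-\ell-\tau})$; that is $p$ times the average degree, not $p$ times the number of edges. This slip is what leads you to declare the plain union bound over all $2^n$ sets ``hopeless'' and to fall back on an $\epsilon$-approximation/VC ``trace'' claim that you do not justify (the trace of the system $\{E(G[S]):S\subset V(G)\}$ on $m$ edges can be $2^{\Omega(m)}$, so no bound of the form $n^{O(e(H))}$ is available). With the correct $p$ one has $p\,e(G)=\Theta(N^{\ell+\rho'})$ while $n={N\choose\ell}=O(N^{\ell})$, so the Chernoff lower tail $\exp(-\Omega(\eta\,p\,e(G)))$ for each fixed dense $S$ beats the factor $2^{n}$ outright; this naive union bound is exactly the paper's event $C$, and no sampling lemma is needed. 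Finally, the lemma demands average degree at least $N^{\rho}$ for the given $\rho<1$ (and the proof of Theorem~\ref{thm:ffree_cover} needs $\rho$ close to $1$), so ``we just need some $\rho>0$'' misreads the statement; this is precisely why $\rho'$ is squeezed between $\rho$ and $1$, giving $e(G_{\rm simple})=\Theta(N^{\ell+\rho'})$ and hence average degree exceeding $N^{\rho}$.
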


Given this lemma, the proof of Theorem~\ref{thm:ffree_cover} follows at
once, using the supersaturation theorem of Erd\H{o}s and
Simonovits~\cite{ES}, which itself has a very straightforward proof.

\begin{prop}[Erd\H{o}s and Simonovits~\cite{ES}]\label{prop:supersaturation}
Let $H$ be an $\ell$-graph and let $\epsilon > 0$. There exists $N_0$ and
$\eta > 0$ such that if $C$ is an $\ell$-graph on $N \ge N_0$ vertices
containing at most $\eta N^{v(H)}$ copies of $H$ then
$e(C) \le (\pi(H) + \epsilon){N \choose \ell}$.
\end{prop}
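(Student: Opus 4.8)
The plan is to prove the contrapositive: if $C$ is an $\ell$-graph on $N$ vertices with $e(C)>(\pi(H)+\epsilon){N\choose\ell}$ and $N$ is large, then $C$ contains more than $\eta N^{v(H)}$ copies of $H$ for a suitable $\eta>0$. The crucial point is to get the order of quantifiers right. Since $\pi(H)=\lim_{m\to\infty}{\rm ex}(m,H){m\choose\ell}^{-1}$, I first fix one integer $m$, depending only on $H$ and $\epsilon$, large enough that ${\rm ex}(m,H)<(\pi(H)+\epsilon/2){m\choose\ell}$; in particular every $\ell$-graph on $m$ vertices with at least $(\pi(H)+\epsilon/2){m\choose\ell}$ edges contains a copy of $H$. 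Only once $m$ is fixed do I choose $\eta$ (in terms of $m$, $\epsilon$ and $v(H)$) and then $N_0$ (in terms of $m$, $\epsilon$ and $H$).

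Next I average over the $m$-subsets of $V(C)=[N]$. If $W$ is a uniformly random $m$-subset then, using ${N\choose m}{m\choose\ell}={N\choose\ell}{N-\ell\choose m-\ell}$,
\[
 {\mathbb E}\,e(C[W])\,=\,e(C)\,\frac{{N-\ell\choose m-\ell}}{{N\choose m}}\,=\,e(C)\,\frac{{m\choose\ell}}{{N\choose\ell}}\,>\,(\pi(H)+\epsilon)\,{m\choose\ell}\,.
\]
Since $e(C[W])\le{m\choose\ell}$ always, a one-line convexity estimate (writing $p=\Pr[e(C[W])\ge(\pi(H)+\epsilon/2){m\choose\ell}]$ and bounding ${\mathbb E}\,e(C[W])\le p{m\choose\ell}+(1-p)(\pi(H)+\epsilon/2){m\choose\ell}$) gives $p\ge\epsilon/2$. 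Hence at least $(\epsilon/2){N\choose m}$ of the $m$-subsets $W$ satisfy $e(C[W])\ge(\pi(H)+\epsilon/2){m\choose\ell}>{\rm ex}(m,H)$, so by the choice of $m$ each such $W$ spans a copy of $H$ in $C$.

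Finally I double-count the pairs $(K,W)$ where $K$ is a copy of $H$ in $C$ and $W$ is an $m$-subset containing $V(K)$. We may assume $H$ has no isolated vertices, so each copy $K$ occupies exactly $v(H)$ vertices and lies in exactly ${N-v(H)\choose m-v(H)}$ of the $m$-subsets, while at least $(\epsilon/2){N\choose m}$ of the $m$-subsets contribute at least one such pair. Therefore the number of copies of $H$ in $C$ is at least
\[
 \frac{(\epsilon/2){N\choose m}}{{N-v(H)\choose m-v(H)}}\,=\,\frac{\epsilon}{2}\cdot\frac{{N\choose v(H)}}{{m\choose v(H)}}\,,
\]
again by the same binomial identity. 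For $N$ large this exceeds $\eta N^{v(H)}$ with the choice $\eta=\epsilon\big/\big(4\,v(H)!\,{m\choose v(H)}\big)$, contradicting the hypothesis on $C$ and completing the proof of the contrapositive.

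There is no real obstacle here: the argument is entirely elementary and uses nothing about $\pi(H)$ beyond the mere existence of the limit in its definition. The only points needing any care are the quantifier order (fix $m$, then $\eta$, then $N_0$) and the convexity step bounding the proportion of ``dense'' $m$-subsets, both of which are routine; and of course the harmless reduction to the case where $H$ has no isolated vertices. It is precisely this elementariness that allows the proof of Theorem~\ref{thm:ffree_cover} to invoke the proposition as a black box.
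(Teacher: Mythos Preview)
The paper does not actually supply a proof of Proposition~\ref{prop:supersaturation}; it merely quotes the result from Erd\H{o}s and Simonovits~\cite{ES}, remarking that it ``has a very straightforward proof'', and then moves directly to the proof of Theorem~\ref{thm:ffree_cover}. So there is nothing in the paper to compare against beyond that citation.

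Your argument is precisely the classical Erd\H{o}s--Simonovits averaging proof that the paper alludes to, and it is correct. The only cosmetic points are: (i) you may tacitly assume $\pi(H)+\epsilon<1$, since otherwise the hypothesis $e(C)>(\pi(H)+\epsilon){N\choose\ell}$ is vacuous, and this is what makes your bound $p\ge\epsilon/2$ legitimate; (ii) the ``convexity step'' is really just Markov/averaging rather than convexity, but the inequality is of course right. The reduction to $H$ having no isolated vertices is harmless in this context. In short, your write-up is exactly the ``very straightforward proof'' the paper had in mind.
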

\begin{proof}[Proof of Theorem~\ref{thm:ffree_cover}.]
  Let $\epsilon>0$ be as given in the conditions of the theorem. Then let
  $\eta>0$ be given by Proposition~\ref{prop:supersaturation}. We may of
  course assume that $\eta\le\epsilon$.  Choose $\rho < 1$ so that
  $\rho/(2e(H)-1) >1/2e(H)$. Then apply Lemma~\ref{lem:sparse} to obtain
  $G_{\rm simple}$. Apply Theorem~\ref{thm:sparsecont} to $G_{\rm
    simple}$ with $\delta=\eta/2$, with $n={N\choose \ell}$ and $d \ge
  N^\rho$, so $d$ is large if $N$ is large, to obtain a collection $\cal
  C$ for the independent sets in $G_{\rm simple}$. As remarked before,
  every $H$-free $\ell$-graph $I$ on vertex set $[N]$ is an independent set
  in $G_{\rm simple}$ and is therefore contained in some subset $C\in
  {\cal C}$, which itself can be regarded as an $\ell$-graph on vertex
  set~$[N]$. We have $|{\cal C}|\le 2^{\beta n}$ where
  $\beta=(1/d)^{1/(2e(H)-1)}$.  Since $\rho/(2e(H)-1) >1/2e(H)$ we have
  $\log |{\cal C}|\le N^{\ell-1/2e(H)}$, as claimed.

  All that remains, then, is to verify the second assertion of the
  theorem. In the assertion, the number of copies of $H$ in $C$ is the same
  as $e(G[C])$. By Theorem~\ref{thm:sparsecont}, $e(G_{\rm simple}[C])
  < \delta e(G_{\rm simple})$. Since $\delta=\eta/2$,
  Lemma~\ref{lem:sparse} shows $e(G[C]) < \eta e(G) \le \epsilon e(G)$.  Now
  $e(G)$ is the number of copies of $H$ with vertices in~$[N]$ and so
  $e(G)<N^{v(H)}$.  So Proposition~\ref{prop:supersaturation} implies
  $e(C) \le (\pi(H) + \epsilon){N \choose \ell}$, completing the proof.
\end{proof}
\begin{proof}[Proof of Lemma~\ref{lem:sparse}.] 
  We form $G_{\rm simple}$ by randomly choosing edges of $G$ and then
  deleting a few so that the result is a simple hypergraph. Observe that
  ${N\choose v(H)}\le e(G)\le N^{v(H)}$, so that $e(G)=\Theta(N^h)$ where
  $h=v(H)$. We may assume that $H$ has more than one edge, and so
  $h\ge\ell+1\ge 3$. Call a pair $e,e'$ of edges of $G$ with
  $|e\cap e'|\ge 2$, an {\em overlapping pair}.  Notice that the number of
  overlapping pairs is the number of copies $H,H'$ of $H$ with vertices in
  $[N]$ that have at least two $\ell$-edges in common: $H$ and $H'$
  must share at least $\ell+1$ vertices and so the number of overlapping
  pairs is $O(N^{2h-\ell-1})$.

  Pick a number $\rho'$ with $\rho<\rho'<1$.  Let $G'$ be a subgraph of
  $G$ formed by picking edges independently and at random with
  probability $p=N^{-h+\ell+\rho'}$. Let $E$ be the number of edges
  of~$G'$.  We make use of standard bounds on the tail of the binomial
  distribution, to wit, if $X\sim{\rm Bi}(m,p)$ then $\Pr\{X\le (3/4){\mathbb
    E}X\}\le e^{-{\mathbb E}X/40}$, and the same bound holds for $\Pr\{X\ge
    % corollary 2.3, page 27 of JLR
  (5/4){\mathbb E}X\}$ (see for example~\cite[Corollary 2.3]{JLR}). So if $A$ is
  the event $\{3{\mathbb E}E/4\le E\le 5{\mathbb E}E/4\}$ then $A$ holds with
  high probability, certainly more than $2/3$. Observe that if $A$ holds
  then $G'$ has $\Theta(N^{\ell+\rho'})$ edges.

  Let $F$ be the number of overlapping pairs in $G'$. Then ${\mathbb
    E}F=O(p^2N^{2h-\ell-1}) = O(N^{\ell+2\rho'-1})=o(N^{\ell+\rho'})$. Let
  $B$ be the event $\{F\le3{\mathbb E}F\}$. By Markov's inequality, $B$ holds
  with probability at least $2/3$.

  For each $S\subset V(G)$, let $C_S$ be the event that both $e(G[S]) \ge \eta
  e(G)$ and $e(G'[S]) \le 3pe(G[S])/4$ hold. Then the probability that
  $C_S$ occurs is at most $\exp(-pe(G[S])/40)=
  \exp(-p\Theta(N^h))=\exp(-\Theta(N^{\ell+\rho'}))$. Let $C$ be the event
  that $C_S$ does not hold for any~$S\subset V(G)$. There are
  $2^{|V(G)|}$ subsets~$S$, so the probability that $C$ fails to hold is at
  most $\exp(N^\ell-\Theta(N^{\ell+\rho'}))=o(1)$.

  There is therefore a positive probability that $A$, $B$ and $C$ all hold.
  Let $G'$ be a graph for which they all do hold, remove an edge from each
  overlapping pair, and call the result $G_{\rm simple}$. This graph
  has no overlapping pairs and so is simple. The number of edges is $E-F$.
  Since $A$ and $B$ hold, we have $E=\Theta(N^{\ell+\rho'})$ and
  $F=o(N^{\ell+\rho'})$, so $F=o(E)$ and $E-F = \Theta(N^{\ell+\rho'})
  >N^{\ell+\rho}$. The graph has fewer than $N^\ell$ vertices and so its
  average degree exceeds $N^\rho$. Finally, let $S\subset V(G)$ be such
  that $e(G[S]) \ge \eta e(G)$. The event $C$ holds, and so $C_S$ does not;
  thus $e(G'[S]) \ge 3pe(G[S])/4\ge 3p\eta e(G)/4 \ge (3/4)\eta(4/5) E$, the
  last inequality holding because $A$ holds. Therefore $e(G_{\rm
    simple}[S]) \ge 3\eta E/5-F$. But $F=o(E)$ so
  $e(G_{\rm simple}[S]) \ge 3\eta E/5-F\ge \eta(E-F)/2=\eta e(G_{\rm
    simple})/2$, which completes the proof.
\end{proof}


\begin{thebibliography}{99}

\bibitem{BMS} J.~Balogh, R.~Morris and W.~Samotij, Independent sets in
  hypergraphs, (to appear in {\sl J. Amer. Math. Soc.}).

\bibitem{BT} B.~Bollob\'as and A.~Thomason, The structure of hereditary
  properties and colourings of random graphs, {\sl Combinatorica} {\bf 20}
  (2000) 173--202.

\bibitem{CG} D.~Conlon and W.T.~Gowers, Combinatorial theorems in sparse
  random sets, submitted.

\bibitem{DN} R.~Dotson and B.~Nagle, Hereditary properties of hypergraphs,
  {\sl J.\ Combinatorial Theory (Ser.\ B)} {\bf 99}
  (2009), 460--473.

\bibitem{EFR} P.~Erd\H{o}s, P.~Frankl and V.~R\"odl, The asymptotic number
  of graphs not containing a fixed subgraph and a problem for hypergraphs
  having no exponent, {\sl Graphs Combin.} {\bf 2} (1986), 113--121.

\bibitem{EKR} P.~Erd\H{o}s, D.J.~Kleitman and B.L.~Rothschild, Asymptotic
  enumeration of $K_n$-free graphs, {\sl Colloquio Internazionale sulle Teorie
  Combinatorie (Rome, 1973)} {\bf 2} (1976), 19--27.

\bibitem{ES} P.~Erd\H{o}s and M.~Simonovits, Supersaturated graphs and
  hypergraphs, {\sl Combinatorica} {\bf 3} (1983), 181--192.

\bibitem{KLR} Y.~Kohayakawa, T.~\L{u}czak and V.~R\"odl, On $K_4$-free
  subgraphs of random graphs, {\sl Combinatorica} {\bf 17} (1997), 173--213.

\bibitem{KRS} Y.~Kohayakawa, V.~R\"odl and M.~Schacht, The Tur\'an theorem
  for random graphs, {\sl Combinatorics, Probability and Computing} {\bf
    13} (2004), 61--91.

\bibitem{MT2} E.~Marchant and A.~Thomason, The structure of hereditary
  properties and 2-coloured multigraphs, {\sl Combinatorica} {\bf 31}
  (2011), 85--93.

\bibitem{JLR} S.~Janson, T.~\L{u}czak and A.~Ruci\'nski, {\sl Random graphs}
 (2000), Wiley.

\bibitem{NRS} B.~Nagle, V.~R\"odl and M.~Schacht, Extremal hypergraph
  problems and the regularity method, in ``Topics in discrete mathematics'',
{\sl Algorithms Combin.} {\bf 26} (2006), 247--278.

\bibitem{PS} H.-J.~Pr\"omel and A.~Steger, Excluding induced subgraphs III: a
general asymptotic, {\sl Rand.\ Struct.\ Alg.}\ {\bf3} (1992), 19--31.

\bibitem{ST1} D.~Saxton and A.~Thomason, List colourings of regular
  hypergraphs, {\sl Combinatorics, Probability and Computing} (2011).

\bibitem{ST2} D.~Saxton and A.~Thomason, Hypergraph containers,
  arXiv:1204.6595 [math.CO].

\bibitem{S} M.~Schacht, Extremal results for random discrete structures,
  submitted.

\bibitem{SV} T.~Szab\'o and V.H.~Vu, Tur\'an's theorem in sparse random
  graphs, {\sl Random Structures and Algorithms} {\bf 23} (2003), 225--234.

\end{thebibliography}
\end{document}